\newtheorem{thm}{Theorem}[section]
\newtheorem{cor}[thm]{Corollary}
\newtheorem{prop}[thm]{Proposition}
\newtheorem{lem}[thm]{Lemma}
\newtheorem{Athm}{Theorem}
\theoremstyle{definition}
\newtheorem{rem}{Remark}
\newtheorem*{xrem}{Remark}
\newcommand{\norm}[1]{\left\|#1\right\|}
\numberwithin{equation}{section}
\def\eq#1{{\rm(\ref{#1})}}
\def\Eq#1#2{\ifthenelse{\equal{#1}{*}}
  {\begin{equation*}\begin{aligned}[]#2\end{aligned}\end{equation*}}
  {\begin{equation}\begin{aligned}[]\label{#1}#2\end{aligned}\end{equation}}}
\newcommand{\Res}{\mathcal{R}}
\newcommand\QA[1]{\mathscr{A}_{#1}}
\newcommand{\restr}{\!\!\upharpoonright}
\def\B{\mathscr{B}}
\def\D{\mathscr{D}}
\def\G{\mathscr{G}}
\def\H{\mathscr{H}}
\newcommand\EE{{\bf E}}
\def\MM{\mathbf{M}}
\def\KK{\mathbf{K}}
\def\vone{{\bf1}}
\newcommand\R{\mathbb{R}}
\newcommand\N{\mathbb{N}}
\DeclareMathOperator{\Var}{Var}
\DeclareMathOperator{\sign}{sign}
\newcommand\abs[1]{\left|#1\right|}
\author{Pawe{\l} Pasteczka}
\address{Institute of Mathematics\\ University of Rzesz\'ow\\ Pigonia 1\\ 35-310 Rzesz\'ow\\ Poland}
\email{ppasteczka@ur.edu.pl}
\subjclass[2010]{26E60, 26B15, 26A18, 39B12, 41A58}
\keywords{Gaussian iteration process, invariant means,  quasideviation mean, $p$-variable means, mean-type mapping, Arrow-Pratt index}
\title[On the new smoothness class of means...]{On the new smoothness class of means and its impact to mean-type mappings}
\begin{document}
\begin{abstract}
We define so-called residual means, which have a Taylor expansion of the form
$M(x)=\bar x +\tfrac12 \xi_M(\bar x) \Var(x)+o(\|x-\bar x\|^\alpha)$ for some $\alpha>2$ and a single-variable function $\xi_M$ ($\bar x$~stands for the arithmetic mean of the vector $x$), and show that all symmetric means which are three times continuously differentiable are residual. We also calculate the value of residuum for quasideviation means and a few subclasses of this family.

Later, we apply it to establish the limit of the sequence $\big(\frac{\text{Var}\ {\bf M}^{n+1}(x)}{(\text{Var}\ {\bf M}^n(x))^2}\big)_{n=1}^\infty$, where ${\bf M} \colon I^p\to I^p$ is a mean-type mapping consisting of $p$-variable residual means on an interval $I$, and $x \in I^p$ is a nonconstant vector.
\end{abstract}

\maketitle
\section{Introduction}

Various aspects of means have been continuously studied for more than a hundred years. There is a wide variety of problems related to them, such as comparability, equality between means, invariance, etc. The value of means close to the diagonal (that is, the set where all entries are equal to each other) plays an important role in this theory. Let us just recall a classical result stating that if two quasiarithmetic means (see section~\ref{sec:QA} for the definition) are comparable in the vicinity of the diagonal, then they are comparable on the whole domain (see, for example, Hardy-Littlewood-Poly\'a \cite{HarLitPol34}). Moreover, under the additional smoothness assumptions, comparability of two quasiarithmetic means can be expressed in terms of comparability of two single-variable functions. More precisely, if $f,g\colon I \to \R$ satisfy the natural smoothness assumptions, then $\QA{f}\le \QA{g}$ if and only if $\tfrac{f''}{f'}\le \tfrac{g''}{g'}$ ($\QA{f}$ stands for the quasiarithmetic mean generated by $f$). This fact is easily implied by Jensen's inequality; see Mikusiński \cite{Mik48} for the most general wording of this statement. 

Furthermore, it is known that iterations of mean-type mapping consisting of continuous strict means converge to some point on the diagonal (see, for example, \cite[Theorem~8.8]{BorBor87}). In 2018 it was proved \cite{Pas18b} that in the case when all means are quasiarithmetic, the precise speed of convergence is related to the variance of Arrow-Pratt indexes of all these means. The precise result reads as follows.
\begin{Athm}\label{thm:A}
 Let $p \in \N$, $I$ be an interval, $f_1,\dots,f_p \colon I \to \R$ be strictly monotone, twice continuously differentiable functions such that each $f_i''$ is locally Lipschitz. Define the mean-type mapping $\MM \colon I^p \to I^p$ by 
$\MM(x):=(\QA{f_1}(x),\dots,\QA{f_p}(x))$,
 where $\QA{f_i}$ is the quasiarithmetic mean generated by $f_i$. 
 
 For all $x \in I^p$ either $\MM^n(x)$ is constant for some $n \in \N$ or
 \Eq{*}{
 \lim_{n \to \infty} \frac{\Var \MM^{n+1}(x)}{\big(\Var \MM^n(x)\big)^2}=\frac14 \Var\bigg(\frac{f_1''(m)}{f_1'(m)},\dots,\frac{f_p''(m)}{f_p'(m)}\bigg),
 }
 where $m$ is the value of the unique $\MM$-invariant mean at $x$.
\end{Athm}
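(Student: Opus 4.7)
The plan is to combine the residuality expansion (established earlier in the paper for $C^3$ symmetric means) with the classical fact that iterations $\MM^n(x)$ of a mean-type mapping composed of continuous strict means converge to a diagonal point $(m,\dots,m)$, where $m$ is the unique $\MM$-invariant mean at $x$. First I would observe that under the smoothness hypotheses on $f_i$, each quasiarithmetic mean $\QA{f_i}$ falls in the residuality framework with $\xi_{\QA{f_i}}(t)=f_i''(t)/f_i'(t)$; this is essentially a direct Taylor expansion of $f_i^{-1}\bigl(\tfrac1p\sum_j f_i(x_j)\bigr)$ about $\bar x$, and the local Lipschitz assumption on $f_i''$ delivers a remainder of the form $o(\|x-\bar x\vone\|^\alpha)$ for some $\alpha>2$ (here $\alpha=3$ suffices).

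Set $y^{(n)}:=\MM^n(x)$, $\bar y_n:=\tfrac1p\sum_j y^{(n)}_j$, $v_n:=\Var y^{(n)}$, and write $\xi_i:=\xi_{\QA{f_i}}$. The standing assumption that $\MM^n(x)$ is never constant gives $v_n>0$ for all $n$, while diagonal convergence gives $v_n\to 0$ and $\bar y_n\to m$. Applying the residual expansion coordinate-wise,
\[
y^{(n+1)}_i = \QA{f_i}(y^{(n)}) = \bar y_n + \tfrac12 \xi_i(\bar y_n)\, v_n + r_i^{(n)},\qquad r_i^{(n)}=o\bigl(v_n^{\alpha/2}\bigr),
\]
since $\|y^{(n)}-\bar y_n\vone\|^2 = p\,v_n$. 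Denoting $\eta_n:=\tfrac1p\sum_j \xi_j(\bar y_n)$ and averaging the displayed identity over $i$, then subtracting, we get
\[
y^{(n+1)}_i - \overline{y^{(n+1)}} = \tfrac12 \bigl(\xi_i(\bar y_n) - \eta_n\bigr)\,v_n + o\bigl(v_n^{\alpha/2}\bigr).
\]

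Squaring and averaging over $i$: the leading contribution is $\tfrac14 \Var\bigl(\xi_1(\bar y_n),\dots,\xi_p(\bar y_n)\bigr)\cdot v_n^2$, the cross term is $o(v_n^{1+\alpha/2})$, and the squared remainder is $o(v_n^\alpha)$. Dividing by $v_n^2$,
\[
\frac{v_{n+1}}{v_n^2} = \tfrac14 \Var\bigl(\xi_1(\bar y_n),\dots,\xi_p(\bar y_n)\bigr) + o\bigl(v_n^{\alpha/2-1}\bigr).
\]
Since $\alpha>2$, the error vanishes as $n\to\infty$; continuity of each $\xi_i$ combined with $\bar y_n \to m$ forces the right-hand side to converge to $\tfrac14 \Var\bigl(f_1''(m)/f_1'(m),\dots,f_p''(m)/f_p'(m)\bigr)$, the desired limit.

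The main obstacle is making the $o(\cdot)$ in the definition of residuality uniform along the sequence of base points $\bar y_n$: the definition supplies a pointwise remainder at each fixed $\bar y$, whereas we must apply it at the moving points $\bar y_n$. Since $\bar y_n$ lies in a compact neighborhood of $m$, and in the quasiarithmetic setting the Taylor remainders of the generators $f_i$ are controlled locally uniformly via the $C^2$-plus-locally-Lipschitz hypothesis on $f_i''$, the needed uniform estimate follows from a compactness argument on the explicit Lagrange-form remainder. Carrying out this bookkeeping — rather than the underlying computation — is the only technically delicate step.
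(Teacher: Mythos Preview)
Your argument is correct and mirrors the paper's own route: Theorem~A is recovered there as the quasiarithmetic instance of Theorem~\ref{thm:IM} together with the identification $\xi_{\QA{f}}=f''/f'$. The only minor differences are that the paper's proof of Theorem~\ref{thm:IM} expands each $M_i(y^{(n)})$ about the fixed limit $\mu=K(x)$ rather than the moving arithmetic mean $\bar y_n$, and that the uniformity you flag as the ``main obstacle'' is already built into the definition of a residual mean (the constants $K,\varepsilon,\alpha$ there are uniform over every compact $J\Subset I$), so no additional bookkeeping is required.
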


This theorem supports the idea of searching the generalization of the Arrow-Pratt index for other families of means. The comparability-type result will no longer be valid since the comparability of means is not localizable in general. On the other hand, there is the hope to reprove Theorem~\ref{thm:A} for the broader class of means. Our motivation comes from the theorem of Borwein-Borwein.
\begin{Athm}[\!\!\cite{BorBor87}, Theorem 8.8]\label{thm:BorBor}
    Let $p \in \N$, $I$ be an interval, $M_1\dots,M_p\colon I^p\to I$ be strict means, $x \in I^p$, and $\MM:=(M_1,\dots,M_p)\colon I^p\to I^p$. If all $M_i$-s are continuously differentiable and $\MM^{n}(x)$ is nonconstant for all $n \in \N$ then
    \Eq{*}{
    \lim_{n\to \infty} \frac{\max \MM^{n+1}(x)-\min \MM^{n+1}(x)}{\max \MM^{n}(x)-\min \MM^{n}(x)}=0.
    }
    If the means are twice continuously differentiable, convergence in the Gaussian iteration is quadratic (uniformly on compact subsets).
\end{Athm}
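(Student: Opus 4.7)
The plan is to split the argument into two phases: force the iterates onto the diagonal, then extract the contraction rate from a Taylor expansion there. For the first phase, set $y^{(n)}:=\MM^n(x)$ and $\phi(y):=\max y-\min y$. Strictness of each $M_i$ makes $\min y^{(n)}$ weakly increasing and $\max y^{(n)}$ weakly decreasing, both bounded inside $I$, hence convergent to some $a\le b$. If $a<b$, compactness would produce a subsequential limit $y^*$ with $\phi(y^*)=b-a>0$, and strictness together with continuity would then force $\min\MM(y^*)>a$ and $\max\MM(y^*)<b$, contradicting the convergence of the extrema along the iteration. So $a=b=:\alpha$ and $y^{(n)}\to\alpha\vone$. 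A useful byproduct is $\alpha\in[\min y^{(n)},\max y^{(n)}]$ for every $n$, giving $\|y^{(n)}-\alpha\vone\|_\infty\le \phi(y^{(n)})$.

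For the contraction rate I would Taylor-expand each $M_i$ at the limit $\alpha\vone$. Using $C^1$ regularity,
\Eq{*}{M_i(\alpha\vone+h) = \alpha + \langle a^{(i)},h\rangle + o(\|h\|),}
with $a^{(i)}:=(\partial_j M_i(\alpha\vone))_j$ a probability vector by the mean property. The structural input I plan to exploit is that all $a^{(i)}$ collapse to a common weight vector $w$; in the symmetric setting natural for Gaussian iteration this forces $w=(1/p)\vone$. Granted this, $\MM(y)-(\alpha+\langle w,h\rangle)\vone=o(\|h\|)$, and the bound $\|h\|\le \sqrt{p}\,\phi(y)$ from the first phase then yields $\phi(\MM(y))=o(\phi(y))$, so the ratio tends to $0$.

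Extending the expansion under $C^2$ regularity picks up Hessian contributions $\tfrac12 h^\top H^{(i)} h$; since the linear parts already coincide across $i$, $\MM(y)-(\alpha+\langle w,h\rangle)\vone=O(\|h\|^2)=O(\phi(y)^2)$, the quadratic rate. Uniform Taylor remainder control on compact subsets of $I^p$ then promotes this to uniform quadratic convergence. The main obstacle I foresee is the structural claim that the $a^{(i)}$ all coincide: this is automatic for symmetric means (forcing $a^{(i)}=(1/p)\vone$) and is exactly what makes the first-order contraction vanish, but need not hold for arbitrary strict $C^1$ means, for which $\phi(\MM(y))/\phi(y)$ generically converges to the modulus of the second-largest eigenvalue of the Jacobian at $\alpha\vone$. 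Pinning down the precise hypothesis on the $M_i$ under which this collapse holds is the delicate point.
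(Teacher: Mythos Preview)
This statement is quoted from Borwein--Borwein and the present paper supplies no proof of it, so there is no internal argument to compare against. Your two-phase outline---compactness to force the iterates onto the diagonal, then a Taylor expansion at the limit point to read off the rate---is the standard route and is correct \emph{once symmetry of the $M_i$ is assumed}, which is the standing hypothesis throughout the paper's own development (Lemma~\ref{lem:1}, Theorem~\ref{thm:main}). Under symmetry every gradient $a^{(i)}$ at $\alpha\vone$ equals $\tfrac1p\vone$, your linear terms cancel, and the bounds $\|y^{(n)}-\alpha\vone\|_\infty\le\phi(y^{(n)})$ and $\phi(\MM(y))=o(\|h\|)$ (respectively $O(\|h\|^2)$ in the $\mathcal{C}^2$ case) give exactly the stated conclusions.

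Your self-diagnosed obstacle is genuine, not merely delicate: without symmetry the conclusion is simply false as written. Take $p=2$, $I=\R$, $M_1(x_1,x_2)=\tfrac{2x_1+x_2}{3}$ and $M_2(x_1,x_2)=\tfrac{x_1+2x_2}{3}$. Both are strict $\mathcal{C}^\infty$ means, the iterates are never constant when $x_1\ne x_2$, yet
\Eq{*}{
\max\MM(x_1,x_2)-\min\MM(x_1,x_2)=\tfrac13\,|x_1-x_2|
}
for every $(x_1,x_2)$, so the ratio is identically $\tfrac13$. This confirms your reading that in the non-symmetric case the limiting ratio is governed by the subdominant eigenvalue of the Jacobian at the diagonal rather than vanishing. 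In the Borwein--Borwein setting the means are taken symmetric, which is precisely the hypothesis that collapses each $a^{(i)}$ to $\tfrac1p\vone$ and makes your argument go through; you have correctly located both the gap and its repair.
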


Based on Theorem~\ref{thm:BorBor}, the most natural assumption in a potential generalization of Theorem~\ref{thm:A} would be the twice continuous differentiability of the means. Regretfully, our assumptions are more restrictive. We will deal with this in the next section. We define so-called >>residual<< means and show that all means which are three times continuously differentiable are residual (section~\ref{sec:Avm}).
Then the Arrow-Pratt index corresponds to the  >>residuum<< of a mean. Later, after some auxiliary results, we generalize Theorem~\ref{thm:A} for mean-type mappings consisting of residual means (section~\ref{sec:ApplicationInvMeans}). In the final section, we establish the value of residuum for quasideviation means and a few subclasses of this family.

\section{\label{sec:Avm}Approximate values of means}
This section, in a rough sketch, is devoted to estimating the value of $p$-variable $\mathcal{C}^2$ means $M \colon I^p \to I$ close to the diagonal (Lemma~\ref{lem:1}). Prior to this, let us formally introduce a few preliminary definitions.

Throughout this note $p \in \N$ and $I$ is an open subinterval of $\R$. Recall that a \emph{\mbox{$p$-variable} mean on $I$} is an arbitrary function $M \colon I^p\to I$ satisfying the \emph{mean property}, that is
\Eq{*}{
\min(x_1,\dots,x_p)\le M(x_1,\dots,x_p)\le\max(x_1,\dots,x_p)
\text{ for all }x_1,\dots,x_p \in I.
}
A mean $M$ is called \emph{symmetric} whenever $M(x\circ \sigma)=M(x)$ for all $x\in I^p$ and a permutation $\sigma\colon\{1,\dots,p\}\to\{1,\dots,p\}$. Since $M$ is a real-valued function on $I^p$ we can easily define its properties such as continuity and other smoothness properties. Furthermore, we say that a mean $M$ is \emph{strict} provided
\Eq{*}{
\min(x_1,\dots,x_p)< M(x_1,\dots,x_p)<\max(x_1,\dots,x_p)
}
for every non-constant vector $(x_1,\dots,x_p) \in I^p$.

From now on, let $\vone$ stands for the vector of the suitable dimension with all elements equal to one. Observe that the mean property forces the values of the mean on the diagonal
\Eq{*}{
\Delta_p(I):=\{(x_1,\dots,x_p)\in I^p\colon x_1=\cdots=x_p\}=\{x\vone\colon x \in I\}.
}
More precisely, $M(x{\bf 1})=x$ for all $x \in I$ (this property alone is known as reflexivity). 

In this paper, we will also use local properties of means. This concept goes back to P\'ales--Zakaria \cite{PalZak17}. That is, we say that a mean $M \colon I^p \to I$ is \emph{locally $\mathcal{C}^k$} if it is $\mathcal{C}^k$ in an open neighborhood of $\Delta_p(I)$.

We are heading towards the lemma, which establishes the value of a mean close to the diagonal.
Later we introduce the family of means satisfying the stronger version of the inequality proved in this lemma (residual means). Then, in Theorem~\ref{thm:main}, we show that all locally $\mathcal{C}^3$ means admit this property. The motivation for such an approach will be clarified in the next section.

For a vector $s \in \R^p$ let us introduce three classical, brief notations
\Eq{*}{
\EE s:=\frac1p \sum_{i=1}^p s_i;\qquad
\EE s^2:= \frac1p \sum_{i=1}^p s_i^2; \qquad \Var(s):=\EE s^2-(\EE s)^2.
}
Now we are ready to proceed to the first lemma.
\begin{lem}\label{lem:1}
Let $M\colon I^p \to I$ be a symmetric, locally $\mathcal{C}^2$ mean.
There exists a continuous function $\xi_M \colon I \to I$ such that for all $x \in I$ and $s\in \R^p$ we have
\Eq{E1}{
M(x\vone+t s)&=  x+t\EE s+\frac{t^2}2\xi_M(x) \Var(s)+o(t^2) \quad \text{ for }t\approx 0.
}
Moreover, for all $x \in I$,
\Eq{xiM}{
\xi_M(x)=-p^2 \partial_1\partial_2M(x\vone);\qquad\xi_M(x)=\frac{p^2}{p-1} \partial_1^2M(x\vone).
}
\end{lem}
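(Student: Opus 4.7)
The plan is to compute a second-order Taylor expansion of $M$ at the diagonal point $x\vone$, and then use symmetry together with the identities obtained by differentiating the reflexivity relation $M(y\vone)=y$ to collapse the many coefficients into a single invariant multiplying $\Var(s)$.

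Since $M$ is $\mathcal{C}^2$ on a neighborhood of $x\vone$, Taylor's theorem provides
\[
M(x\vone+ts)=x+t\sum_{i=1}^p \partial_i M(x\vone)\,s_i+\frac{t^2}{2}\sum_{i,j=1}^p \partial_i\partial_j M(x\vone)\,s_i s_j+o(t^2),
\]
where I already used $M(x\vone)=x$ for the constant term. Symmetry of $M$ implies that all first partials at $x\vone$ share a common value $\alpha(x)$, and that the Hessian at $x\vone$ has only two distinct entries: a diagonal value $\beta(x):=\partial_1^2 M(x\vone)$ and an off-diagonal value $\gamma(x):=\partial_1\partial_2 M(x\vone)$.

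Next I would differentiate $M(y\vone)=y$ with respect to $y$ at $y=x$. One differentiation gives $p\,\alpha(x)=1$, so the linear term in the expansion equals $t\,\EE s$ as required. A second differentiation gives $p\,\beta(x)+p(p-1)\gamma(x)=0$, i.e.\ $\beta(x)=-(p-1)\gamma(x)$. Using this, the Hessian quadratic form rewrites as
\[
\sum_{i,j}\partial_i\partial_j M(x\vone)\,s_i s_j=\bigl(\beta(x)-\gamma(x)\bigr)\sum_i s_i^2+\gamma(x)\Bigl(\sum_i s_i\Bigr)^2=p\bigl(\beta(x)-\gamma(x)\bigr)\EE s^2+p^2\gamma(x)(\EE s)^2,
\]
and substituting $\beta(x)-\gamma(x)=-p\gamma(x)$ collapses the right-hand side to $-p^2\gamma(x)\Var(s)$. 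Setting $\xi_M(x):=-p^2\gamma(x)$ yields both \eqref{E1} and the first formula of \eqref{xiM}; the alternative expression follows by eliminating $\gamma(x)$ in favor of $\beta(x)$ through $\beta(x)=-(p-1)\gamma(x)$. Continuity of $\xi_M$ is inherited from continuity of the second partials of $M$ on the neighborhood of the diagonal.

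The entire argument is a direct Taylor expansion followed by elementary algebra, so I do not foresee any serious obstacle. The only delicate bookkeeping is the conversion between $\sum_i s_i^2$, $(\sum_i s_i)^2$ and their normalized counterparts $\EE s^2$, $(\EE s)^2$, which must be performed consistently so that the factor $p^2$ in $\xi_M$ emerges with the correct sign; and one should remember to invoke the reflexivity identities in the \emph{differentiated} form, which is what forces the Hessian quadratic form to depend on $s$ only through $\Var(s)$.
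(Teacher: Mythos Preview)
Your proposal is correct and follows essentially the same approach as the paper: a second-order Taylor expansion at $x\vone$, exploitation of symmetry to reduce to two Hessian values, and use of reflexivity to obtain the constraint $\partial_1^2M(x\vone)=-(p-1)\partial_1\partial_2M(x\vone)$, yielding $\xi_M(x)=-p^2\partial_1\partial_2M(x\vone)$. The only cosmetic differences are your choice of names (your $\beta,\gamma$ are the paper's $\alpha,\beta$) and that you obtain the constraints by directly differentiating $M(y\vone)=y$, while the paper substitutes $s=\vone$ into the Taylor expansion and compares coefficients---the two routes are equivalent.
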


\begin{proof} Fix $x \in I$ and $s \in \R^p$ arbitrarily. First, we use Taylor's theorem. For $t$ close to zero, we get
\Eq{118}{
M(x\vone+t s)=M(x\vone)+t \sum_{i=1}^p \partial_iM(x\vone)s_i+\frac{t^2}2 \sum_{i=1}^p\sum_{j=1}^p \partial_i\partial_j M(x\vone) s_is_j+o(t^2).
}

Since $M$ is symmetric, we know that the value of the partial derivatives $\partial_iM$ taken on the diagonal does not depend on $i=1,\dots,p$. The same holds for second partial derivatives (separately pure and mixed). Therefore, for the sake of brevity, let us define $\alpha,\beta,\gamma \colon I \to \R$ by
\Eq{*}{
\alpha(x):=\partial_1^2M(x\vone);\quad
\beta(x):=\partial_1\partial_2M(x\vone);\quad \gamma(x):=\partial_1M(x\vone).
}

Since $M$ is reflexive, we have $M(x\vone)=x$ and $M((x+t)\vone)=x+t$. Now we take the first order Taylor's expansion of $M((x+t)\vone)$ with respect to $t$ in an open neighborhood of zero. Then we get 
\Eq{*}{
x+t=M\big((x+t)\vone\big)=M(x\vone)+t \sum_{i=1}^p \partial_iM(x\vone)+o(t)&=x+tp\gamma(x)+o(t)\\
&\text{ for }x \in I\text{ and }t \approx 0.
}
Now we apply a few transformations side-by-side. First, we subtract $x$, then divide by $tp$ and, finally, take a limit $t \to 0$. Then we get  $\gamma \equiv \frac1p$. Therefore \eq{118} for $t \approx 0$ has the following form
\Eq{E179}{
M(x\vone+t s)&=x+t \sum_{i=1}^p \frac{s_i}p+\frac{t^2}2 \Big( \alpha(x) \sum_{i=1}^p s_i^2+\beta(x)\sum_{i\ne j}s_is_j \Big)+o(t^2).
} 
In a particular case $s=\bf 1$, this identity yields
\Eq{*}{
x+t =M(x\vone+t \vone)&=x+t+ \tfrac{t^2}2\big(p\alpha(x) + ( p^2 -p)\beta(x)
\big)+o(t^2).
}
Now, repeating the reasoning above, we subtract $(x+t)$ side-by-side, divide by $\frac12 t^2$, and take the limit as $t\to0$. Then we have $p\alpha(x)+ ( p^2 -p)\beta(x)=0$. After easy simplification, we obtain $\alpha(x)=(1-p)\beta(x)$.

Furthermore, using the standard identity for symmetric polynomials of degree two, we get
\Eq{*}{
\sum_{i\ne j} s_is_j&= \Big(\sum_{i=1}^p s_i\Big)^2-\sum_{i=1}^p s_i^2= p^2 (\EE s)^2-p\EE s^2.
}
Thus, by \eq{E179}, we obtain
\Eq{E156}{
M(x\vone+t s)&=x+t \EE s+ \frac{t^2}{2}\Big(\alpha(x) p\EE s^2+ \beta(x)\big( p^2 (\EE s)^2-p\EE s^2\big)
\Big)+o(t^2).
}
Now, let us simplify
\Eq{*}{
\alpha(x) p\EE s^2+ \beta(x)\big( p^2 (\EE s)^2-p\EE s^2\big)&=
p(1-p)\beta(x)\EE s^2+ \beta(x)\big( p^2 (\EE s)^2-p\EE s^2\big)\\
&=\big((p(1-p)-p) \EE s^2+ p^2(\EE s)^2\big)\beta(x)
\\
&=-p^2\beta(x)\big( \EE s^2-(\EE s)^2
\big).
}
Then the equality above jointly with \eq{E156} implies \eq{E1} with the substitution
$\xi_M(x):=-p^2\beta(x)$. Applying the definition of $\beta$, we obtain the first equality in \eq{xiM}. Similarly
\Eq{*}{
\xi_M(x)=-p^2\beta(x)=\frac{-p^2}{1-p}\alpha(x)=\frac{p^2}{p-1}\partial_1^2M(x{\bf 1}),
}
which is the second form of the function $\xi_M$ appearing in \eq{xiM}.
\end{proof}

Based on the above lemma, for every symmetric mean $M\colon I^p \to I$ which is locally $\mathcal{C}^2$,
the function $\xi_M\colon I \to I$ defined by \eq{xiM} is called the \emph{residuum of $M$}. In the following technical lemma, we show another way to calculate the value of residuum.
\begin{lem}\label{lem:Formxi_M}
    Let $M\colon I^p \to I$ be a symmetric, locally $\mathcal{C}^2$ mean. Then 
    \Eq{*}{
    \xi_M(x)=\lim_{t\to 0} \frac{2p^2}{(p-1)t^2}\big(M(x+t,x,\dots,x))-x-\tfrac tp\big)\qquad (x\in I).
    }
\end{lem}
\begin{proof}
    Define $s:=(1,0,0,\dots)$. Then $\Var(s)=\frac{p-1}{p^2}$. Thus, by Lemma~\ref{lem:1} we get
    \Eq{*}{
    \xi_M(x)=\lim_{t \to 0}\frac{2\big(M(x\vone+t s)-x-t\EE s\big)}{t^2\Var(s)}
    =\lim_{t \to 0}\frac{2p^2}{p-1}\frac{M(x\vone+t s)-x-\frac t p}{t^2},
    }
    which completes the proof.
\end{proof}

\subsection{\label{sec:residual} Residual means}
Prior to defining residual means, let us fix a norm $\norm{\cdot}$ in $\R^p$ to be the $\ell^\infty$ norm. Hereafter, we will use two standard notations
\Eq{*}{S^{p-1}:=\{x \in \R^p \colon \norm{x}=1\},\quad\text{ and }\quad\mathcal{B}^p(\varepsilon):=\{s \in \R^p \colon \norm{s}\le\varepsilon\}\text{ for }\varepsilon>0.
}
We stress that none of our results will depend on the choice of the norm. On the other hand, the norm could affect the constants indicated in their proofs.

We say that a symmetric mean $M \colon I^p \to I$ is \emph{residual} if it is locally $\mathcal{C}^2$ and for every compact subinterval $J \subset I$  there exist  $\lambda,\varepsilon \in(0,+\infty)$ and $\alpha \in (2,+\infty)$ such that
\Eq{D-res}{
\abs{M(x\vone+s)-x-\EE s-\tfrac12 \xi_M(x) \Var(s)}\le\lambda\norm{s}^\alpha
}
is valid for all $x \in J$ and $s\in \mathcal{B}^p(\varepsilon)$,
where $\xi_M$ is given by \eq{xiM}.

\begin{rem}
Note that the values $\lambda,\varepsilon$ and $\alpha$ depend only on $M$, $J$ and the norm. Thus, purely formally, we assume that for every compact subinterval $J$ of $I$ there exists $\lambda=\lambda_{M,J} \in (0,+\infty)$, $\varepsilon=\varepsilon_{M,J} \in (0,+\infty)$ and $\alpha=\alpha_{M,J} \in (2,+\infty)$ such that \eq{D-res} holds for all $x \in J$ and $s \in \mathcal{B}^p(\varepsilon)=\mathcal{B}^p(\varepsilon_{M,J})$. 
\end{rem}

\begin{rem}
The inequality in \eq{D-res} cannot be replaced by a strict inequality. Indeed, for a null vector $s$, it simplifies to $0 \le 0$.
\end{rem}

\medskip 

Let $\Res_p(I)$ be the family of all residual $p$-variable means on $I$. Clearly, every residual mean is locally $\mathcal{C}^2$. We show a sort of reverse statement. 

\begin{thm}\label{thm:main}
Every symmetric, locally $\mathcal{C}^3$  mean $M \colon I^p\to I$ is residual.
\end{thm}
\begin{proof}
Let $M \colon I^p\to I$ be a symmetric, locally $\mathcal{C}^3$  mean. Take a compact subinterval $J$ of $I$ and choose $\varepsilon>0$ such that $[\inf J -2\varepsilon, \sup J+2\varepsilon]\subset I$ and $M$ is $\mathcal{C}^3$ on $\{x \in J^p \colon \max(x)-\min(x)<3\varepsilon \}$.  For the sake of brevity, let us define a set $\Gamma:=J \times S^{p-1} \times [-\varepsilon,\varepsilon]$. In view of Taylor's theorem, there exist continuous functions $A, B \colon J \times S^{p-1} \to \R$ and a function $C \colon \Gamma \to \R$ such that
    \Eq{*}{
        M(x\vone+t \eta)=M(x{\bf 1})+A(x,\eta) t +B(x,\eta) t^2+C(x,\eta,t)t^3\quad\text{ for all }(x,\eta,t)\in \Gamma.
        }
 Functions $A$ and $B$ are defined by
\Eq{*}{
A(x,\eta)&:=\phantom{1} \partial^{\phantom2}_t M(x\vone+t \eta) \big|_{t=0};\\
B(x,\eta)&:=\tfrac12 \partial^2_t M(x\vone+t \eta) \big|_{t=0}.
}
The value of $C$ is determined by $A$, $B$ and equals
\Eq{*}{
C(x,\eta,t):=
\begin{cases}
\dfrac{M(x\vone+t \eta)-(M(x{\bf 1})+A(x,\eta) t +B(x,\eta) t^2)}{t^3}& \text{ for }t \ne 0,\\[3mm]
\frac16 \partial^3_t M(x\vone+t \eta) \big|_{t=0}& \text{ for }t =0.
\end{cases}
}

Let us now define the Taylor's reminder term $R\colon \Gamma\to \R$ by
\Eq{*}{
R(x,\eta,u):=\tfrac16 \partial^3_t M(x\vone+t \eta)\big|_{t=u}.
}
Clearly $M$ is three times continuously differentiable on the set $\{x\vone+t\eta\colon (x,\eta,t)\in \Gamma\}$. Therefore, by Taylor's theorem, for every $(x,\eta,t) \in \Gamma$ there exists $u\in [-\varepsilon,\varepsilon]$ such that $C(x,\eta,t)=R(x,\eta,u)$. 
Since $R$ is a continuous function defined on a compact set, the value $\lambda:=\sup\{|R(x,\eta,u)|\colon(x,\eta,u)\in \Gamma\}$ is finite. Therefore 
\Eq{*}{
\sup \big\{|C(x,\eta,t)|\colon(x,\eta,t)\in \Gamma\big\} \le \sup \big\{|R(x,\eta,u)|\colon(x,\eta,u)\in \Gamma\big\} = \lambda
} 
and we get
\Eq{E:294}{
\big|M(x\vone+t \eta)-(M(x{\bf 1})+A(x,\eta) t &+B(x,\eta) t^2)\big| \le \lambda |t|^3 \text{ for all }(x,\eta,t) \in \Gamma.
}
However, in view of Lemma~\ref{lem:1}, we get $M(x{\bf 1})=x$, $A(x,\eta)=\EE \eta$, and $B(x,\eta)=\tfrac12 \xi_M(x) \Var(\eta)$. Therefore \eq{E:294} simplifies to
\Eq{*}{
\abs{M(x\vone+t \eta)-x-\EE (t\eta)-\tfrac{1}2 \xi_M(x) \Var(t\eta)}\le\lambda \norm{t\eta}^3 \text{ for all }(x,\eta,t)\in \Gamma.
}

Finally, let us observe that every vector $s\in \mathcal{B}^p(\varepsilon)$ can be expressed as a product $s=t\eta$ where $t \in [-\varepsilon,\varepsilon]$ and $\eta\in S^{p-1}$. Therefore
\Eq{*}{
\abs{M(x\vone+s)-x-\EE s-\tfrac{1}2 \xi_M(x) \Var(s)}\le\lambda \norm{s}^3
}
for all $x\in J$ and $s\in \mathcal{B}^p(\varepsilon)$, which is precisely the definition of residuality (with $\alpha=3$).
\end{proof}

\begin{rem}
In this way, we have proved that residuality is in fact a local, intermediate smoothness assumption between being locally $\mathcal{C}^2$ and locally $\mathcal{C}^3$. Indeed, we know that every residual mean is (by the definition) locally $\mathcal{C}^2$. Conversely, by Theorem~\ref{thm:main} every symmetric mean which is locally $\mathcal{C}^3$ is residual.

We conjecture that the residuality with $\alpha \in (2,3)$ refers to the H\"older continuity of the second derivative. 
\end{rem}
\subsection{Means on any number of arguments}
Until now, all means considered in this note were defined only for a given (fixed) number of arguments. It is, however, quite often that the input to the mean can be a vector of an arbitrary (finite) length. Then a mean on $I$ is a function $M\colon \bigcup_{p=1}^\infty I^p \to I$ such that each $M\restr_p:=M|_{I^p}$ is a $p$-variable mean on $I$. Then we use the standard convention that such a mean has a certain property (for example, continuity, symmetry, etc.) precisely if all its restrictions $M\restr_p$ admit this property. In the same spirit, we say that a mean $M\colon \bigcup_{p=1}^\infty I^p \to I$
is residual if, for all $p\in \N$, its $p$-variable restriction $M\restr_p$ is residual. 

We also need one property that binds values of a mean for various numbers of parameters. Namely, mean $M$ is called \emph{repetition invariant} if, for all $p,q\in\N$
and $(x_1,\dots,x_p)\in I^p$, the following identity is satisfied
\Eq{*}{
  M(\underbrace{x_1,\dots,x_1}_{q\text{-times}},\dots,\underbrace{x_p,\dots,x_p}_{q\text{-times}})
   =M(x_1,\dots,x_p).
}
This definition was first introduced by P\'ales-Pasteczka \cite{PalPas16}. It turns out that this property holds for a number of classical families of means -- for example, a broad class of quasideviation means (see section~\ref{sec:QD} below). 

In the next lemma, we show that this property also plays an important role in our new concept of residual means.

\begin{thm}\label{lem:2}
Let $M\colon \bigcup_{p=1}^\infty I^p \to I$ be a symmetric, repetition invariant, residual mean. Then the value of $\xi_{M\,\restr_p}$ does not depend on $p$.
\end{thm}
\begin{proof}
    Let $p,q \in \N$, $p,q \ge 2$. First, we prove the equality $\xi_{M\,\restr_{pq}}=\xi_{M\,\restr_p}$. To keep the proof more compact, fix $x \in I$ and define 
    \Eq{*}{
    \hat x:=(\underbrace{x,\dots,x}_{p\text{ times}}),\quad 
    \hat s:=(1,\underbrace{0,\dots,0}_{(p-1)\text{ times}}),\quad 
    \bar x:=(\underbrace{x,\dots,x}_{pq\text{ times}}),\quad 
    \bar s:=(\underbrace{1,\dots,1}_{q\text{ times}},\underbrace{0,\dots,0}_{(pq-q)\text{ times}}).\quad 
    }
    Then $\bar s$ and $\hat s$ possess the same probability distribution (provided choosing entries are uniformly distributed), and whence $\EE\hat s=\EE\bar s$, and $\Var(\hat s)=\Var(\bar s)$. 
     Furthermore, since $M$ is repetition invariant, we have $M(\bar x+t\bar s)=M(\hat x+t\hat s)$ for all admissible $t$.
    Thus, Lemma~\ref{lem:1} implies (here and below $t\approx0$)
    \Eq{*}{
    0&=M(\bar x+t\bar s)-M(\hat x+t\hat s)\\
    &= x+t\EE \bar s+\xi_{M\,\restr_{pq}}(x) \Var(\bar s)\tfrac{t^2}2-x-t\EE \hat s- \xi_{M\,\restr_{p}}(x) \Var(\hat s)\tfrac{t^2}2+o(t^2)\\
    &=\tfrac{t^2}2\Var(\hat s)(\xi_{M\,\restr_{pq}}(x)- \xi_{M\,\restr_{p}}(x))+o(t^2).
    }
Since $\hat s$ is a nonconstant vector we know that $\Var(\hat s)\ne 0$. Thus, we can divide by $\frac{t^2}2\Var(\hat s)$ and take the limit $t \to 0$, to obtain $0=\xi_{M\,\restr_{pq}}(x)-\xi_{M\,\restr_{p}}(x)$. Since $x$ was chosen to be an arbitrary element in $I$, we immediately get $ \xi_{M\,\restr_{pq}}=\xi_{M\,\restr_{p}}$. 

Finally we get $\xi_{M\,\restr_{p}}=\xi_{M\,\restr_{pq}}=\xi_{M\,\restr_{qp}}=\xi_{M\,\restr_{q}}$, which completes the proof.
\end{proof}
\section{An application to invariant means\label{sec:ApplicationInvMeans}}

A mapping $\mathbf{M}\colon I^{p}\rightarrow I^{p}$ is referred to as 
\emph{mean-type} if $\mathbf{M}=(M_{1},\dots ,M_{p})$ for some means $M_1,\dots,M_p\colon I^p\rightarrow I$.
A mean $K\colon I^{p}\rightarrow I$ is \emph{invariant with respect to the mean-type mapping $\mathbf{M}$} (briefly \emph{$\mathbf{M}$-invariant}), if it solves the functional equation $K\circ 
\mathbf{M}=K$.

Invariance property is a vital aspect of the theory of means. There are two classical studies, Lagrange \cite{Lag84} and Gauss \cite{Gau18}, which could be considered as the beginning of this research. It has been extensively studied by many authors since then. For example J.~M.~Borwein and P.~B.~Borwein \cite{BorBor87} extended some earlier ideas \cite{FosPhi84a,Leh71,Sch82} and 
generalized the original iteration to a vector of continuous, strict means of an arbitrary length.
For several recent results about the Gaussian product of means, see the papers by Baj\'ak--P\'ales 
\cite{BajPal09b,BajPal09a,BajPal10,BajPal13}, by Dar\'oczy--P\'ales \cite{Dar05a,DarPal02c,DarPal03a}, 
by G{\l}azowska \cite{Gla11b,Gla11a}, by Jarczyk--Jarczyk \cite{JarJar18}, by Matkowski \cite{Mat99b,Mat02b,Mat05,Mat09e}, by Matkowski-Pasteczka \cite{MatPas20,MatPas21}, by~Matkowski--P\'ales \cite{MatPal15}, and by Pasteczka \cite{Pas18b,Pas23b,Pas22b}.

There are very close relations between the invariant means and the sequence of iterations of $\MM$. 
For example (cf. \cite{MatPas21}) the $\MM$-invariant mean is uniquely determined if, and only if, for all $x \in I^p$ the sequence of iterates $(\MM^n(x))_{n=1}^\infty$ is convergent to some point of $\Delta_p(I)$. On the other hand, in Borwein-Borwein \cite[Theorem~8.8]{BorBor87}, it was proved that if means are twice continuously differentiable, then the convergence of this iteration is quadratic. In this spirit the author proved in \cite{Pas18b} that in the case when the mean-type mapping consists of quasiarithmetic means satisfying some additional smoothness assumptions, we can efficiently calculate the limit of the sequence of ratios $\Big(\frac{\Var \MM^{n+1}(x)}{\Var \MM^{n}(x)^2}\Big)_{n=1}^\infty$ in the nondegenerated case; see Theorem~\ref{thm:A} above. The limit value turned out to be the variance of Arrow-Pratt indexes of the corresponding means calculated at the point that is equal to the invariant mean. It will be mostly generalized in our next theorem. For the sake of completeness, let us mention that there were slightly different smoothness assumptions in the previous result. 

Similarly to Theorem~\ref{thm:A}, we show that if the mean-type mapping $\MM$ contains residual means only, then the sequence $\Var \MM^n(x)$ of iterations of the vector converges to zero quadratically for >>generic<< vectors $x$ and we calculate the precise speed of this convergence.

\begin{thm}\label{thm:IM}
    Let $p \in \N$, and $\MM:=(M_1,\dots,M_p)\colon I^p\to I^p$ be a mean-type mapping consisting of symmetric, residual means. If there exists a unique $\MM$-invariant mean $K \colon I^p \to I$ then
for each $x \in I^p$ either there exists $n_0\in \N$ such that $\MM^{n_0}(x)$ is constant or
        \Eq{359}{
        \lim_{n \to \infty} \frac{\Var \MM^{n+1}(x)}{(\Var \MM^{n}(x))^2}=\tfrac14\Var\big(\xi_{M_1}\big(K(x)\big),\dots,\xi_{M_p}\big(K(x)\big)\big).
        }
\end{thm}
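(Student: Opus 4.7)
The plan is to track the iterates by decomposing each $\MM^{n}(x)$ as a point of the diagonal plus a zero-mean perturbation, apply the residual expansion of each $M_i$ to that decomposition, and then compute the variance of the resulting vector to leading order. Concretely, set $x^{(n)}:=\MM^{n}(x)$, $m_n:=\EE x^{(n)}$, $s^{(n)}:=x^{(n)}-m_n\vone$ and $V_n:=\Var x^{(n)}$, so that $\EE s^{(n)}=0$, $\Var s^{(n)}=V_n$, and $V_n$ is comparable to $\norm{s^{(n)}}^{2}$ by equivalence of norms on $\R^{p}$. Uniqueness of the $\MM$-invariant mean $K$ together with the Matkowski--Pasteczka characterisation recalled above forces $x^{(n)}\to K(x)\vone$, whence $m_n\to K(x)$ and $V_n\to 0$. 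If $V_{n_0}=0$ for some $n_0$, then $x^{(n_0)}$ is constant and the first alternative holds; otherwise $V_n>0$ for every $n$ and there is something to prove.

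In the nondegenerate case I would fix a compact subinterval $J\Subset I$ whose interior contains $K(x)$ and all but finitely many $m_n$, and harvest from the residuality of $M_1,\dots,M_p$ on $J$ a common remainder constant $K^{*}>0$ and common exponent $\alpha>2$ (take the maximum constant and the minimum exponent among the $p$ means). For large $n$ the residual bound then gives
\[\bigl|M_i(m_n\vone+s^{(n)})-m_n-\tfrac12 \xi_{M_i}(m_n)V_n\bigr|\le K^{*}\norm{s^{(n)}}^{\alpha}=O(V_n^{\alpha/2})=o(V_n),\]
using $\EE s^{(n)}=0$, $\Var s^{(n)}=V_n$ and $\alpha>2$. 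Writing this as $x^{(n+1)}_i=m_n+\tfrac12\xi_{M_i}(m_n)V_n+r^{(n)}_i$ with $\max_i|r^{(n)}_i|=o(V_n)$ and averaging over $i$ yields $m_{n+1}=m_n+\tfrac12\bar\xi_n V_n+o(V_n)$, where $\bar\xi_n:=\tfrac1p\sum_i \xi_{M_i}(m_n)$. Subtracting gives $x^{(n+1)}_i-m_{n+1}=\tfrac12(\xi_{M_i}(m_n)-\bar\xi_n)V_n+o(V_n)$, and squaring kills the cross term (which is $O(V_n)\cdot o(V_n)=o(V_n^{2})$), so that averaging over $i$ produces
\[V_{n+1}=\tfrac14 V_n^{2}\,\Var\bigl(\xi_{M_1}(m_n),\dots,\xi_{M_p}(m_n)\bigr)+o(V_n^{2}).\]

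To finish I would divide by $V_n^{2}$, legal by the standing nondegeneracy assumption, and pass to the limit: continuity of each $\xi_{M_i}$ (Lemma~\ref{lem:1}) combined with $m_n\to K(x)$ delivers the asserted value $\tfrac14\Var(\xi_{M_1}(K(x)),\dots,\xi_{M_p}(K(x)))$. The main obstacle I anticipate is the \emph{uniformity} of the residual remainder across iterations: the definition of residual mean supplies constants that a priori depend on the compact subinterval and on the mean, so I must pin down a single $J$ and a single exponent $\alpha>2$ valid for all $p$ means simultaneously. Once this bookkeeping is arranged, the condition $\alpha>2$ is precisely what makes $\norm{s^{(n)}}^{\alpha}=o(V_n)$ and causes all error terms to vanish at second order, which is what drives the whole identity.
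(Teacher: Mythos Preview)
Your proof is correct and follows essentially the same route as the paper's: decompose the iterate as a diagonal point plus a perturbation, apply the residual expansion, compute the variance of the result, and show the remainder is $o(V_n^{2})$ via $\alpha>2$. The only notable difference is that you center the expansion at $m_n=\EE x^{(n)}$ and invoke continuity of the $\xi_{M_i}$ at the end to pass from $m_n$ to $K(x)$, whereas the paper centers directly at $\mu:=K(x)$ (so $\xi_{M_i}(\mu)$ appears from the outset and the error is controlled by $\sum_k|y^{(n)}_k-\mu|^{\alpha}$, which is still $o(V_n)$ since $\mu$, being a mean of $y^{(n)}$, lies in $[\min y^{(n)},\max y^{(n)}]$); both choices work and the bookkeeping you flag about uniform constants is handled exactly as you describe.
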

\begin{proof}
Let $x \in I^p$, and take a compact interval $J \subset I$ such that $x \in J^p$. Define the sequence $(y^{(n)})_{n=0}^\infty$ of the elements in $I^p$ by 
\Eq{*}{
y^{(0)}&:=x;\\
y^{(n)}&:=\MM^{n}(x)\text{ for all }n\ge 1.
}
By the mean property, all $y^{(n)}$-s belong to $J^p$, and therefore we can restrict our consideration to this domain. After this preliminary observation, we unify the definition of residuality for different means. Since all $M_i$-s are residual, for all $i \in\{1,\dots,n\}$, there exist  $\lambda_i,\varepsilon_i \in(0,+\infty)$ and $\alpha_i \in (2,+\infty)$ such that
\Eq{D-resS}{
\abs{M_i(x\vone+s)-x-\EE s-\tfrac12 \xi_{M_i}(x) \Var(s)}\le \lambda_i\norm{s}_{\infty}^{\alpha_i}
}
 is valid for all $x \in J$, and $s \in \mathcal{B}^p(\varepsilon_i)$. We aim to make this inequality $i$-independent. First, we set 
 \Eq{*}{
 \varepsilon_0:=\min\{\varepsilon_1,\dots,\varepsilon_p,1\},\quad \lambda_0:=\max\{\lambda_1,\dots,\lambda_p\},\quad\text{ and }\ 
 \alpha_0:=\min\{\alpha_1,\dots,\alpha_p\}.
 }
 Then $\varepsilon_0 \in(0,1]$, $\lambda_0 \in (0,+\infty)$, and $\alpha_0 \in (2,+\infty)$. Furthermore \eq{D-resS} yields
 \Eq{*}{
\abs{M_i(x\vone+s)-x-\EE s-\tfrac12 \xi_{M_i}(x) \Var(s)}\le \lambda_0\norm{s}_{\infty}^{\alpha_i}
}
to be valid for all $i \in \{1,\dots,p\}$, $x \in J$, and $s \in \mathcal{B}^p(\varepsilon_0)$. Now, since $\norm{s}_{\infty}\le \varepsilon_0\le1$, we can increase the right-hand-side of the latter inequality by replacing $\alpha_i$ by $\alpha_0$. Thus, we have proved the unified version of residuality for $M_i$-s, that is
\Eq{377}{
\abs{M_i(x\vone+s)-x-\EE s-\tfrac12 \xi_{M_i}(x) \Var(s)}\le \lambda_0\norm{s}_{\infty}^{\alpha_0}
}
holds for all $i \in \{1,\dots,p\}$, $x \in J$, and $s \in \mathcal{B}^p(\varepsilon_0)$.

Next, let us set $\mu:=K(x)$, and let $m_n$ be the arithmetic mean of elements in $y^{(n)}$ ($n\in\N$). Then, since $K$ is $\MM$-invariant, we get $\mu=K(y^{(n)})$ for all $n \in\N$.
It is known, see for example \cite{MatPas21}, that the sequence of iterates $(\MM^n)_{n=1}^\infty$ is pointwise convergent to $\KK:=(K,\dots,K)$. Therefore there exists $n_0  \in \N$ such that 
\Eq{*}{
\max y^{(n)}-\min y^{(n)}<\varepsilon_0\text{ for all }n \ge n_0. 
}
Then by the mean property we have $\norm{y^{(n)}-\mu\vone}_\infty < \varepsilon_0$ for all $n \ge n_0$. Thus, we can substitute $x:=\mu$, $s:=s^{(n)}:=y^{(n)}-\mu\vone$ into \eq{377} and obtain
\Eq{*}{
\abs{M_i(y^{(n)})-\EE y^{(n)}-\tfrac12 \xi_{M_i}(\mu) \Var(s^{(n)})}\le\lambda_0\norm{s^{(n)}}_\infty^{\alpha_0},\quad i \in\{1,\dots,p\},\,n\ge n_0.
}
Equivalently
\Eq{*}{
\abs{\EE y^{(n)}-y^{(n+1)}_i+\tfrac12 \xi_{M_i}(\mu) \Var(s^{(n)})}\le\lambda_0\norm{s^{(n)}}_\infty^{\alpha_0},\quad i \in\{1,\dots,p\},\,n\ge n_0.
}
Taking the arithmetic mean with respect to $i$ on the left-hand side and using the triangle inequality, we get
\Eq{*}{
\abs{\EE y^{(n+1)}-\EE y^{(n)}-\tfrac12 \EE \xi_{M_i}(\mu) \Var(s^{(n)})}\le\lambda_0\norm{s^{(n)}}_\infty^{\alpha_0},\quad n\ge n_0.
}
Therefore, binding the above inequatilities, for all $i \in\{1,\dots,p\}$ and $n\ge n_0$ we get
\Eq{*}{
\big|\EE y^{(n+1)}-y_i^{(n+1)} &-\tfrac12 (\EE \xi_{M_j}(\mu) -\xi_{M_i}(\mu)) \Var(s^{(n)})\big|\\
&\le \abs{\EE y^{(n+1)}-\EE y^{(n)}-\tfrac12 \EE \xi_{M_j}(\mu) \Var(s^{(n)})}\\
&\qquad+\abs{\EE y^{(n)}-y^{(n+1)}_i+\tfrac12 \xi_{M_i}(\mu) \Var(s^{(n)})}\le 2\lambda_0\norm{s^{(n)}}_\infty^{\alpha_0}.
}
Since $s^{(n)}$ is a nonconstant vector for all $n\in\N$, we have $\Var(s^{(n)})>0$ and this inequality can rewritten in the following equivalent form
\Eq{434}{
\bigg|\frac{\EE y^{(n+1)}-y_i^{(n+1)}}{\Var(s^{(n)})}-\frac{\EE \xi_{M_j}(\mu) -\xi_{M_i}(\mu)}2 \bigg|\le \sigma_n&\text{ for }i\in\{1,\dots p\}\text{ and }n \ge n_0,
}
 where $\sigma_n:=\frac{2\lambda_0\norm{s^{(n)}}_\infty^{\alpha_0}}{\Var(s^{(n)})}$.
In the next step, we establish the upper bound of $\sigma_n$. First, we majorize the value of the sum of nonnegative elements by the maximal one. Then we
use the easy-to-see property $\max((a-b)^2,(b-c)^2)\ge \frac14 (a-c)^2$ which is valid for all $a,b,c\in \R$. Therefore we have
\Eq{*}{
\Var(s^{(n)})&=\frac{1}{n} \sum_{i=1}^p (s^{(n)}_i-\EE s^{(n)})^2
\ge \frac 1n \max \{ (\max s^{(n)}-\EE s^{(n)})^2, (\min s^{(n)}-\EE s^{(n)})^2\}\\
&\ge \frac{1}{4n}(\max s^{(n)}-\min s^{(n)})^2\qquad (n\in\N).
}
Finally, by the mean value property, all vectors $s^{(n)}$ possess both nonpositive and nonnegative elements, which implies 
$\norm{s^{(n)}}_\infty \le \max s^{(n)}-\min s^{(n)}$ for all $n \in \N$. 
This inequality implies that 
\Eq{*}{
\sigma_n=\frac{2\lambda_0}{\Var(s^{(n)})}\norm{s^{(n)}}_\infty^{\alpha_0} \le 8n\lambda_0\big(\max s^{(n)}-\min s^{(n)}\big)^{\alpha_0-2} \qquad \text{ for all }n \in\N.
}
However, by the invariance principle, we have $\lim\limits_{n \to \infty} \max y^{(n)}=\lim_{n \to \infty} \min y^{(n)}=\mu$, which yields
$\lim\limits_{n \to \infty}  \max s^{(n)}=\lim\limits_{n \to \infty} \min s^{(n)}=0$.
Therefore, since $\alpha_0>2$ and $\sigma_n\ge 0$ for all $n \in \N$, we get
$\lim\limits_{n \to \infty} \sigma_n=0$. Then \eq{434} implies that
\Eq{*}{
\lim_{n \to \infty} \frac{\EE y^{(n+1)}-y_i^{(n+1)}}{\Var(s^{(n)})} &= \frac{\EE \xi_{M_j}(\mu) -\xi_{M_i}(\mu)}2 \qquad \text{for all }i \in\{1,\dots,p\}.
}
Finally we obtain
\Eq{*}{
\lim_{n \to \infty} &\frac{\Var \MM^{n+1}(x)}{(\Var \MM^{n}(x))^2}
=\lim_{n \to \infty}\frac{\Var(y^{(n+1)})}{\Var(y^{(n)})^2}
=\lim_{n \to \infty}\frac{\Var(y^{(n+1)})}{\Var(s^{(n)})^2}\\
&\qquad\qquad=\lim_{n \to \infty}\frac{ \sum_{i=1}^p\big(\EE y^{(n+1)}-y_i^{(n+1)}\big)^2 }{p\Var(s^{(n)})^2}
= \frac1p \sum_{i=1}^p\lim_{n \to \infty}\bigg(\frac{\EE y^{(n+1)}-y_i^{(n+1)}}{\Var(s^{(n)})}\bigg)^2\\
&\qquad\qquad= \frac1p \sum_{i=1}^p \bigg(\frac{\EE \xi_{M_j}(\mu) -\xi_{M_i}(\mu)}{2}\bigg)^2 =\frac{\Var\big(\xi_{M_1}(\mu),\dots,\xi_{M_p}(\mu)\big)}4,
}
which concludes the proof.
\end{proof}

This statement has an immediate corollary.
\begin{cor}\label{cor:IM}
    Let $p \in \N$, and $\MM:=(M_1,\dots,M_p)\colon I^p\to I^p$ be a mean-type mapping consisting of residual, continuous, and strict means.
For each $x \in I^p$ either there exists $n_0\in \N$ such that $\MM^{n_0}(x)$ is constant or \eq{359} holds,         where $K \colon I^p \to I$ is the (unique) $\MM$-invariant mean.
\end{cor}

\section{Examples}
This section aims to establish residua for possibly broad classes of means. This allows one to apply Theorem~\ref{thm:IM} for all mean-type mappings built from means in these classes. We intentionally avoid direct applications of Theorem~\ref{thm:IM}, since it is a purely technical operation.

\subsection{\label{sec:QD}Quasideviation means}
The notions of deviations and quasideviations were introduced by Daróczy \cite{Dar71b} and by Páles \cite{Pal82a}, respectively. In what follows, we recall Definition~2.1 and Theorem~2.1 from \cite{Pal82a}. A bivariate function $E:I^2\to\R$ is called a \emph{quasideviation} if $E$ has the following three properties:
\begin{enumerate}[(D1)]
 \item For all $x,u\in I$, the equality $\sign E(x,u)=\sign(x-u)$ holds;
 \item For all $x\in I$, the mapping $I\ni u\mapsto E(x,u)$ is continuous;
 \item For all $x,y\in I$ with $x<y$, the mapping
 \Eq{*}{
   (x,y)\ni u\mapsto\frac{E(x,u)}{E(y,u)}
 }
 is strictly decreasing.
\end{enumerate}
In order to introduce quasideviation means, the following statement is instrumental (cf.\ \cite[Theorem 2.1]{Pal82a}). 

\begin{prop} Given a quasideviation $E:I^2\to\R$, a number $n\in \N$, and points $x_1,\dots,x_n\in I$. There exists a unique element $u\in I$ such that
\Eq{E(u)}{
  E(x_1,u)+\dots+E(x_n,u)=0.
}
Furthermore, $\min(x_1,\dots,x_n)<u<\max(x_1,\dots,x_n)$ unless $x_1=\dots=x_n$.
\end{prop}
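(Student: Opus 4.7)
The plan is to define $F\colon I\to\R$ by $F(u):=\sum_{i=1}^n E(x_i,u)$. Property (D2) makes $F$ continuous, and (D1) pins down the sign of each summand through the comparison of $x_i$ with $u$. The task is then to show $F$ has a unique zero in $I$, with the strict inequality statement falling out of the existence argument.

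First, I would dispose of the trivial case $x_1=\cdots=x_n=x$: by (D1) the single summand $E(x,u)$ has sign $\sign(x-u)$, so $F$ vanishes precisely at $u=x$. In the generic case with $m:=\min x_i<\max x_i=:M$, (D1) gives $E(x_i,m)\ge 0$ for every $i$ and strict inequality for any $i$ with $x_i>m$, hence $F(m)>0$; symmetrically $F(M)<0$. The intermediate value theorem then supplies a zero $u^*\in(m,M)$, simultaneously establishing existence and the strict inequalities $m<u^*<M$.

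The main obstacle will be uniqueness, for which (D3) is essential. Suppose, for contradiction, that two zeros $u_1<u_2$ lie in $(m,M)$. The plan is to partition $\{1,\dots,n\}$ according to the position of $x_i$ relative to $u_1$ and $u_2$, and to renormalize the vanishing equations $F(u_1)=0$ and $F(u_2)=0$ by dividing each summand $E(x_i,u_j)$ by a common reference $E(y,u_j)$; the strict monotonicity of $u\mapsto E(x,u)/E(y,u)$ on $(x,y)$ asserted by (D3) should then force the two rescaled balances to be strictly incompatible. The delicate point will be choosing $y\in I$ (for instance $y=M$, or a value slightly above $M$ if $I$ permits) so that (D3) can be invoked uniformly, and handling any index whose $x_i$ lies outside the range in which (D3) directly applies to the pair $(x_i,y)$: such indices will require invoking (D3) with a different pair tailored to that particular $x_i$, with signs of $E(x_i,u_j)$ tracked carefully so that the various strict inequalities combine coherently into the desired contradiction.
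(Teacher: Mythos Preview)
The paper does not prove this proposition; it is quoted from \cite[Theorem~2.1]{Pal82a} without argument, so there is no in-paper proof to compare against. Your existence argument (intermediate value theorem applied to $F(u)=\sum_i E(x_i,u)$, using (D1) for the signs at $m$ and $M$ and (D2) for continuity) is correct and standard, and it simultaneously yields the strict inequalities $m<u^*<M$.

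The uniqueness sketch, however, has a genuine gap. Your plan is to fix a reference $y$ (say $y>M$) and compare the normalized sums $\sum_i E(x_i,u_j)/E(y,u_j)$ for $j=1,2$, hoping each term decreases as $u$ moves from $u_1$ to $u_2$. Property (D3) does give this termwise decrease for indices with $x_i<u_1$, since then $u_1,u_2\in(x_i,y)$; and a direct sign check handles indices with $u_1\le x_i\le u_2$. But for indices with $u_2<x_i<y$ both $u_1$ and $u_2$ lie \emph{outside} the interval $(x_i,y)$ on which (D3) asserts anything, and the ratio $E(x_i,\cdot)/E(y,\cdot)$ need not be monotone there. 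Your proposed remedy---``invoke (D3) with a different pair tailored to that $x_i$''---does not close the gap: applying (D3) to some pair $(z,x_i)$ with $z<u_1$ yields information about $E(z,\cdot)/E(x_i,\cdot)$, and this does not combine with information about $E(z,\cdot)/E(y,\cdot)$ to control $E(x_i,\cdot)/E(y,\cdot)$ in the required direction.

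A route that does work avoids a fixed reference altogether. Assuming $F(u_1)=F(u_2)=0$ with $u_1<u_2$ in $(m,M)$, set $r_k:=E(x_k,u_1)/E(x_k,u_2)$. For $i$ with $x_i<u_1$ and $j$ with $x_j>u_2$ one has $u_1,u_2\in(x_i,x_j)$, so (D3) applied to the pair $(x_i,x_j)$ yields, after cross-multiplying, $r_i<r_j$. Choosing any $\rho$ strictly between $\max_{x_i<u_1} r_i$ and $\min_{x_j>u_2} r_j$ and checking the sign of each summand in $F(u_1)-\rho F(u_2)$ shows this quantity is strictly positive (indices with $x_i\in[u_1,u_2]$ contribute nonnegatively by (D1) alone), contradicting $F(u_1)=F(u_2)=0$. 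The essential difference from your sketch is that (D3) is invoked only for pairs $(x_i,x_j)$ that straddle $[u_1,u_2]$, precisely the configuration the axiom covers.
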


For $n\in\N$ and $x_1,\dots,x_n\in I$, the solution $u$ of equation \eq{E(u)} is called the \emph{\mbox{$E$-quasideviation} mean of $x_1,\dots,x_n$} and will be denoted by $\D_E(x_1,\dots,x_n)$. In view of \cite[Characterization theorem 2]{Pal82a}, we know that quasideviation means are symmetric and repetition invariant.

We say that a quasideviation $E\colon I^2\to\R$ is \emph{normalizable} (cf.\ \cite{Dar71b}) if, for all $x\in I$, the function $u\mapsto E(x,u)$ is differentiable at $x$ and the mapping $x\mapsto\partial_2E(x,x)$ is negative and continuous on $I$. 


\begin{thm}\label{thm:QD}
Let $E \in \mathcal{C}^2(I^2)$ be a normalizable quasideviation. Then $\D_E$ is twice continuously differentiable. Moreover
\Eq{*}{
    \xi_{\D_E}(x)= \frac{\partial_1^2E}{\partial_1E}(x,x),\qquad x \in I.
}
\end{thm}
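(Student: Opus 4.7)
The plan is to differentiate the defining equation $\sum_{i=1}^n E(x_i,u)=0$ twice via the implicit function theorem and then invoke Lemma~\ref{lem:Formxi_M}. Since quasideviation means are repetition invariant, Lemma~\ref{lem:2} guarantees that $\xi_{\D_E}$ is independent of the arity, so it suffices to carry out the computation for a convenient choice, e.g.\ $n=2$.

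For regularity near $\Delta_p(I)$, the normalizability hypothesis yields $\partial_u \sum_{i=1}^n E(x_i,u) = n\,\partial_2 E(x,x) < 0$ on the diagonal, so the implicit function theorem transfers the $\mathcal{C}^2$ regularity of $E$ to $\mathcal{C}^2$ regularity of $\D_E$ in some neighborhood of $\Delta_p(I)$; this is all that is needed to define $\xi_{\D_E}$. (Global $\mathcal{C}^2$-regularity follows from the same mechanism together with the uniqueness statement in the proposition preceding the definition of $\D_E$.)

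Now set $u(\varepsilon):=\D_E(x+\varepsilon,x)$; by Lemma~\ref{lem:Formxi_M} with $p=2$ one has $\xi_{\D_E}(x)=4u''(0)$. Differentiating $E(x+\varepsilon,u(\varepsilon))+E(x,u(\varepsilon))=0$ once at $\varepsilon=0$ and using the first-order diagonal identity $\partial_1 E(x,x)+\partial_2 E(x,x)=0$ (obtained by differentiating $E(x,x)\equiv 0$, which is forced by (D1)) yields $u'(0)=1/2$. Differentiating a second time at $\varepsilon=0$ expresses $u''(0)$ as an explicit linear combination of $\partial_1^2 E$, $\partial_1\partial_2 E$, $\partial_2^2 E$ and $\partial_2 E$, all evaluated at $(x,x)$.

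The decisive algebraic collapse comes from the second-order diagonal identity
\[ \partial_1^2 E(x,x)+2\partial_1\partial_2 E(x,x)+\partial_2^2 E(x,x)=0, \]
obtained by twice differentiating $E(x,x)\equiv 0$. Substituting it into the expression for $u''(0)$ eliminates the mixed and pure-second-$\partial_2$ terms and produces
\[ \xi_{\D_E}(x)=-\frac{\partial_1^2 E(x,x)}{\partial_2 E(x,x)}=\frac{\partial_1^2 E(x,x)}{\partial_1 E(x,x)}, \]
the last equality again by the first-order identity. I expect the main obstacle to be the careful bookkeeping of second partial derivatives evaluated at the two different base points $(x+\varepsilon,u(\varepsilon))$ and $(x,u(\varepsilon))$; the rest is algebraic manipulation steered by the two identities coming from $E(x,x)\equiv 0$, and as a consistency check the $n$-dependence disappears in the final formula, in agreement with Lemma~\ref{lem:2}.
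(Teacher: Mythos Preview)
Your argument is correct and close in spirit to the paper's, with one small technical wrinkle and one stylistic difference.

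The wrinkle: you invoke Lemma~\ref{lem:2} to reduce to $p=2$, but that lemma is stated for \emph{residual} means, and under the hypothesis $E\in\mathcal{C}^2$ you have only established $\mathcal{C}^2$ regularity of $\D_E$ near the diagonal (residuality is only guaranteed by $\mathcal{C}^3$, cf.\ Theorem~\ref{thm:main}). In practice the proof of Lemma~\ref{lem:2} uses nothing beyond the $\mathcal{C}^2$ expansion of Lemma~\ref{lem:1}, so your reduction is morally fine; but as written you are citing a lemma whose stated hypothesis you have not verified. The paper avoids this by carrying out the computation for arbitrary $p$ from the start and watching the $p$-dependence cancel at the end (exactly the consistency check you mention).

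The stylistic difference: the paper Taylor-expands $E(x+\varepsilon,x+\delta)$ and $E(x,x+\delta)$ in both variables around $(x,x)$, writes $\delta(\varepsilon)=\tfrac{\varepsilon}{p}+\varepsilon^2 c(\varepsilon)$, and solves for $c_0=\lim_{\varepsilon\to0}c(\varepsilon)$; you instead differentiate the implicit equation twice and read off $u''(0)$. These are equivalent bookkeeping devices. The two diagonal identities $\partial_1E+\partial_2E=0$ and $\partial_1^2E+2\partial_1\partial_2E+\partial_2^2E=0$ (both from $E(x,x)\equiv0$) are the key inputs in either version, and the final collapse to $\partial_1^2E/\partial_1E$ is identical.
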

\begin{proof}
By the definition of quasideviation mean, in view of the implicit function theorem, we know that $\D_E \in \mathcal{C}^2$.  

Fix $p \in \N$, $x \in I$ and take any $\varepsilon_0>0$ such that $(x-\varepsilon_0,x+\varepsilon_0)\subset I$. For all $\varepsilon \in (-\varepsilon_0,\varepsilon_0)\setminus\{0\}$ we define
\Eq{*}{
\delta=\delta(\varepsilon):=\D_E(x+\varepsilon,\underbrace{x,\dots,x}_{(p-1) \text{entries}})-x.
}
Then
\Eq{*}{
x+\delta=\D_E(x+\varepsilon,\underbrace{x,\dots,x}_{(p-1) \text{entries}}).
}
Therefore, by the definition of quasideviation mean, we get that $\delta$ is the unique solution of the equation
\Eq{500}{
0&=E(x+\varepsilon,x+\delta)+(p-1)E(x,x+\delta).
}
Now we use Taylor expansion of $E$ twice. Indeed, by property (D1), we have
\Eq{*}{
0=E(x+\varepsilon,x+\varepsilon)&=E(x,x)+(\partial_1E +\partial_2E)(x,x) \varepsilon+(\partial_1^2E+2\partial_1\partial_2E+\partial_2^2E)(x,x)\varepsilon^2.
}
Thus we get 
\Eq{509}{
(\partial_1E +\partial_2E)(x,x)=0\quad\text{and}\quad (\partial_1^2E+2\partial_1\partial_2E+\partial_2^2E)(x,x)=0 \qquad (x \in I).
}

Now we apply Taylor's theorem for the second time. The expansion of \eq{500} at $(\varepsilon,\delta)=(0,0)$ gives us the following equality
\Eq{*}{
0&=\partial_1E (x,x) \varepsilon + \partial_2E(x,x)\delta+\tfrac12  \partial_1^2E(x,x) \varepsilon^2+\tfrac12  \partial_2^2E(x,x) \delta^2+\partial_1\partial_2E(x,x) \varepsilon\delta\\
&\quad+(p-1)\big(\partial_2E(x,x) \delta +\tfrac12 \partial_2^2E(x,x) \delta^2\big)+o((\varepsilon+\delta)^2).
\\
}

Taking the first equality of \eq{509} into account, we have
\Eq{E334}{
\big( p\delta -\varepsilon \big)\partial_1E(x,x)&=\tfrac12\big(\partial_1^2E(x,x)\varepsilon^2+2\partial_1\partial_2E(x,x)\varepsilon\delta+p \partial_2^2E(x,x)\delta^2\big)\\
&\qquad+o((\delta+\varepsilon)^2).
}
Now define $c\colon (-\varepsilon_0,\varepsilon_0) \setminus \{0\} \to \R$ by $c(\varepsilon):=\frac{1}{p\varepsilon^2}(p\delta(\varepsilon)-\varepsilon)$.
Then we have  
\Eq{339}{
\delta(\varepsilon)=\tfrac\varepsilon p+\varepsilon^2c(\varepsilon).
}
Hence, by Lemma~\ref{lem:Formxi_M}, we have
\Eq{*}{
\xi_{\D_E}(x)
&=\lim_{\varepsilon \to 0} \frac{2p^2}{(p-1)\varepsilon^2}(\D_E(x+\varepsilon,\underbrace{x,\dots,x}_{(p-1) \text{entries}})-x -\tfrac \varepsilon p)\\
&= \frac{2p^2}{p-1}\lim_{\varepsilon \to 0} \dfrac{1}{\varepsilon^2}(\delta(\varepsilon) -\tfrac \varepsilon p)
=\frac{2p^2}{p-1}\lim_{\varepsilon \to 0} c(\varepsilon).
}
Thus $c$ has a finite limit at $0$, say $c_0 \in \R$, and 
\Eq{544}{
\xi_{\D_E}(x)=\frac{2p^2}{p-1}c_0.
}
Now we calculate $c_0$ differently. 
Pluging \eq{339} to \eq{E334} gives us
\Eq{*}{
p\varepsilon^2c(\varepsilon)\partial_1E(x,x)&=\frac{\varepsilon^2}2\Big(\partial_1^2E(x,x)+2\partial_1\partial_2E(x,x)\big(\tfrac1p+\varepsilon c(\varepsilon)\big)+p \partial_2^2E(x,x)\big(\tfrac1p+\varepsilon c(\varepsilon)\big)^2\Big)\\
&\quad +o(\varepsilon^2).
}

Thus
\Eq{*}{
c(\varepsilon)&=
\frac1{2p}\frac{\partial_1^2E(x,x)}{\partial_1E(x,x)}+\frac{\partial_1\partial_2E(x,x)}{{\partial_1E(x,x)}}\frac{\tfrac1p+\varepsilon c(\varepsilon)}{p}+ \frac{\partial_2^2E(x,x)}{\partial_1E(x,x)}\frac{\big(\tfrac1p+\varepsilon c(\varepsilon)\big)^2}{2}+o(1).
}
Upon passing the limit $\varepsilon \to 0$, since $\lim_{\varepsilon \to 0}c(\varepsilon)=c_0$ is a (finite) real number, we have 
\Eq{559}{
c_0&=\frac1{2p}\frac{\partial_1^2E(x,x)}{\partial_1E(x,x)}+\frac{\partial_1\partial_2E(x,x)}{{\partial_1E(x,x)}}\frac{1}{p^2}+ \frac{\partial_2^2E(x,x)}{\partial_1E(x,x)}\frac{1}{2p^2}\\
&=\frac{\partial_2^2E(x,x)+2\partial_1\partial_2E(x,x)+p\partial_1^2E(x,x)}{2p^2\partial_1E(x,x)}.\\
}
Now we use the second equality of \eq{509} which, after easy transformation, is equivalent to  $\partial_2^2E(x,x)+2\partial_1\partial_2E(x,x)=-\partial_1^2E(x,x)$. Finally, binding \eq{544}, \eq{559}, and this property, we obtain
\Eq{*}{
\xi_{\D_E}(x)=\frac{2p^2}{p-1}c_0&=\frac{2p^2}{p-1}\cdot \frac{\partial_2^2E(x,x)+2\partial_1\partial_2E(x,x)+p\partial_1^2E(x,x)}{2p^2\partial_1E(x,x)}\\
&=\frac{2p^2}{p-1}\cdot \frac{-\partial_1^2E(x,x)+p\partial_1^2E(x,x)}{2p^2\partial_1E(x,x)}=\frac{\partial_1^2E(x,x)}{\partial_1E(x,x)},
}
which completes the proof.
\end{proof}

\subsection{Bajraktarevi\'c means} Now we study an important subclass of quasideviation means defined by Bajraktarević in the papers \cite{Baj58,Baj63}. 
Let $f,g\colon I\to\R$ be two functions such that $g>0$ and the ratio $x \mapsto \frac{f(x)}{g(x)}$ is continuous and strictly increasing. Then \Eq{QDforB}{
E\colon I \times I \ni (x,u)\mapsto g(u)f(x)-f(u)g(x)
}
is a quasideviation (cf. \cite{Pal87d}), and a simple computation yields that 
\Eq{*}{
\B_{f,g}(x):=\D_E(x)=\Big(\frac fg\Big)^{-1}\left(\frac{f(x_1)+\cdots+f(x_p)}{g(x_1)+\cdots+g(x_p)}\right) \qquad (x\in I^p).
}

Before we proceed to calculate the value of residuum let us recall the theorem which characterizes the equality of Bajraktarevi\'c means. Namely, for the functions $f,g \in \mathcal{C}^2(I)$, P\'ales-Zakaria \cite{PalZak20b} defined two auxiliary operators $\Phi_{f,g},\Psi_{f,g}\colon I \to \R$ by
\Eq{*}{
\Phi_{f,g}(x):=\bigg(\frac{gf''-fg''}{gf'-fg'}\bigg)(x),\qquad 
\Psi_{f,g}(x):=\bigg(\frac{g'f''-f'g''}{gf'-fg'}\bigg)(x);
}
and proved that for $f,g,h,k \colon I \to \R$ (satisfying certain smoothness assumptions) the equality $\B_{f,g}=\B_{h,k}$ holds if and only if $\Phi_{f,g}=\Phi_{h,k}$ and $\Psi_{f,g}=\Psi_{h,k}$ (cf. \cite[Theorem~2.1]{PalZak20b}). 

We show that the function $\Phi_{f,g}$ is the residuum of $\B_{f,g}$. We can combine this fact with the above equality conditions to construct Bajraktarevi\'c means which have the same residuum, although they are not equal to each other. Remarkably, if the mean-type mapping consists of means with the same residuum then, in view of Theorem~\ref{thm:main}, the convergence of (variances of) its iterates is superqudratic.

\begin{prop}\label{prop:Baja}
Let $f,g \in \mathcal{C}^2(I)$ such that $g>0$, the ratio $x \mapsto \frac{f(x)}{g(x)}$ is strictly increasing, and the mapping $gf'-fg'$ is nowhere vanishing.
Then $\xi_{\B_{f,g}}=\Phi_{f,g}$.
\end{prop}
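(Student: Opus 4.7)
The plan is to realize $\B_{f,g}$ as the quasideviation mean $\D_E$ with $E$ as in \eq{QDforB}, and then read off the residuum directly from Theorem~\ref{thm:QD}. Thus the task reduces to verifying that this particular $E$ satisfies the hypotheses of Theorem~\ref{thm:QD}, namely that $E\in\mathcal{C}^2(I^2)$ and $E$ is a normalizable quasideviation, and then computing $(\partial_1^2E/\partial_1E)(x,x)$.

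The regularity $E\in\mathcal{C}^2(I^2)$ is immediate from $f,g\in\mathcal{C}^2(I)$. The three axioms (D1)--(D3) for this $E$ are exactly what makes the Bajraktarević construction a quasideviation mean in the first place; they follow from writing $E(x,u)=g(u)g(x)\bigl(\tfrac{f}{g}(x)-\tfrac{f}{g}(u)\bigr)$ and using that $g>0$ and $f/g$ is strictly increasing. For normalizability I would compute
\Eq{*}{
\partial_2 E(x,x)=g'(x)f(x)-f'(x)g(x)=-(gf'-fg')(x).
}
Since $f/g$ is strictly increasing, its derivative $(gf'-fg')/g^2$ is nonnegative, and since $gf'-fg'$ is nowhere vanishing by hypothesis (and $g>0$), this derivative is strictly positive on $I$. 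Hence $\partial_2 E(x,x)<0$ and continuous, so $E$ is normalizable.

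With the hypotheses of Theorem~\ref{thm:QD} in place, the rest is a direct computation: $\partial_1 E(x,u)=g(u)f'(x)-f(u)g'(x)$ and $\partial_1^2 E(x,u)=g(u)f''(x)-f(u)g''(x)$, so evaluating at $u=x$ gives $\partial_1 E(x,x)=(gf'-fg')(x)$ and $\partial_1^2 E(x,x)=(gf''-fg'')(x)$. Dividing yields
\Eq{*}{
\xi_{\B_{f,g}}(x)=\frac{\partial_1^2 E}{\partial_1 E}(x,x)=\frac{(gf''-fg'')(x)}{(gf'-fg')(x)}=\Phi_{f,g}(x),
}
as required. There is no substantive obstacle here: the only step that requires any care is confirming normalizability (i.e.\ sign and nonvanishing of $gf'-fg'$), and everything else is bookkeeping inside the machinery already established in Theorem~\ref{thm:QD}.
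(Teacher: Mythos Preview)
Your proof is correct and follows essentially the same route as the paper: realize $\B_{f,g}$ as $\D_E$ with $E(x,u)=g(u)f(x)-f(u)g(x)$, then apply Theorem~\ref{thm:QD} and compute $(\partial_1^2E/\partial_1E)(x,x)=\Phi_{f,g}(x)$. You are in fact more thorough than the paper in explicitly checking that $E$ is a $\mathcal{C}^2$ normalizable quasideviation (the paper tacitly assumes this), but the core argument is identical.
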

\begin{proof}
We know that $\B_{f,g}=\D_E$, where $E \colon I \times I \to \R$ is of the form \eq{QDforB}. Moreover
\Eq{*}{
\partial_1E(x,u)=g(u)f'(x)-f(u)g'(x)\text{ and }
\partial_1^2E(x,u)=g(u)f''(x)-f(u)g''(x).
}
Therefore, by Theorem~\ref{thm:QD}, for all $x \in I$, we have
\Eq{*}{
\xi_{\B_{f,g}}(x)=\xi_{\D_E}(x)= \frac{\partial_1^2E}{\partial_1E}(x,x)= \frac{g(x)f''(x)-f(x)g''(x)}{g(x)f'(x)-f(x)g'(x)}=\Phi_{f,g}(x),
}
which completes the proof.
\end{proof}
\subsection{Gini means}
Now we study a generalization of H\"older means introduced by Gini \cite{Gin38}. This is simultaneously a subclass of Bajraktarevi\'c means.

More precisely, for $\alpha,\beta\in\R$, define the \emph{Gini mean} $\G_{\alpha,\beta}\colon \bigcup_{n=1}^{\infty} \R_+^n \to \R_+$ by
\Eq{*}{
\G_{\alpha,\beta} (x):= 
\begin{cases} 
\left(\dfrac{x_1^\alpha+\cdots+x_n^\alpha}{x_1^\beta+\cdots+x_n^\beta} \right)^{\frac{1}{\alpha-\beta}} &\quad \text{ if } \alpha \ne \beta, \\[4mm]        \exp\left(\dfrac{x_1^\alpha\log x_1+\cdots+x_n^\alpha\log x_n}{x_1^\alpha+\cdots+x_n^\alpha} \right) &\quad \text{ if } \alpha = \beta.
\end{cases}
}
It is easy to check that, in the particular case $\beta=0$, we get the $\alpha$-th power (or H\"older) mean $\H_\alpha=\G_{\alpha,0}$. Furthermore $\G_{\alpha,\beta}=\G_{\beta,\alpha}$ for all $\alpha,\beta \in \R$. In addition, for a given $\alpha,\beta\in\R$ we set
\Eq{*}{
f(x):=\begin{cases}
    x^\alpha &\text{ if }\alpha \ne \beta,\\
    x^\alpha\log(x)&\text{ if }\alpha = \beta,
\end{cases}
\qquad\text{ and }\qquad g(x):=x^\beta,
}
and we get $\B_{f,g}=\G_{\alpha,\beta}$. We can utilize this fact to establish the residuum of all means belonging to this family.
\begin{prop}
For all $\alpha,\beta \in \R$ we have
$\xi_{\G_{\alpha,\beta}}(x)= \frac{\alpha+\beta-1}{x}$.
\end{prop}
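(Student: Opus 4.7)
The plan is to invoke Proposition~\ref{prop:Baja}, since the paragraph immediately preceding the statement already identifies $\G_{\alpha,\beta}$ with the Bajraktarevi\'c mean $\B_{f,g}$ for the specified $f$ and $g$. Thus the whole task reduces to computing $\Phi_{f,g}(x)$ and checking that it equals $\tfrac{\alpha+\beta-1}{x}$, separately in the case $\alpha\ne\beta$ and in the case $\alpha=\beta$.

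First I would verify the hypotheses of Proposition~\ref{prop:Baja} on $I=(0,+\infty)$. The function $g$ (equal to $x^\beta$ in the first case and $x^\alpha$ in the second) is positive. The ratio $f/g$ equals $x^{\alpha-\beta}$ when $\alpha\ne\beta$ and $\log x$ when $\alpha=\beta$, both strictly monotone on $(0,+\infty)$. To obtain strict \emph{increase}, I would use the identity $\G_{\alpha,\beta}=\G_{\beta,\alpha}$, which is immediate from the defining formula, to reduce without loss of generality to $\alpha\ge\beta$. A short calculation shows $gf'-fg'=(\alpha-\beta)x^{\alpha+\beta-1}$ in the first case and $x^{2\alpha-1}$ in the second, both nowhere vanishing.

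Once the proposition is applicable, $\xi_{\G_{\alpha,\beta}}(x)=\Phi_{f,g}(x)$. For $\alpha\ne\beta$, the power-rule computation yields
\[
gf''-fg''=\bigl(\alpha(\alpha-1)-\beta(\beta-1)\bigr)x^{\alpha+\beta-2},
\]
and the essential algebraic step is the factorization $\alpha(\alpha-1)-\beta(\beta-1)=(\alpha-\beta)(\alpha+\beta-1)$; cancelling the common factor $\alpha-\beta$ against the denominator gives $\tfrac{\alpha+\beta-1}{x}$ at once.

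The only mildly delicate part, and what I would flag as the main obstacle, is the case $\alpha=\beta$, where $f(x)=x^\alpha\log x$. Expanding $f'$ and $f''$ by the product rule and substituting into $\Phi_{f,g}$, the $\log x$ terms must cancel; after they do, one is left with $gf''-fg''=(2\alpha-1)x^{2\alpha-2}$ and $gf'-fg'=x^{2\alpha-1}$, hence $\Phi_{f,g}(x)=\tfrac{2\alpha-1}{x}=\tfrac{\alpha+\beta-1}{x}$. Alternatively, one can bypass this computation by noting that $\G_{\alpha,\beta}$ is continuous in $(\alpha,\beta)$ (by l'H\^opital on the diagonal), which forces continuity of $\xi_{\G_{\alpha,\beta}}(x)$ in the parameters through the formula of Theorem~\ref{thm:QD}, and then passing to the limit $\alpha\to\beta$ in the formula already established in the previous paragraph.
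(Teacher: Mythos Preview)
Your proposal is correct and follows essentially the same route as the paper: invoke Proposition~\ref{prop:Baja} to identify $\xi_{\G_{\alpha,\beta}}$ with $\Phi_{f,g}$, compute directly for $\alpha\ne\beta$ via the factorization $\alpha(\alpha-1)-\beta(\beta-1)=(\alpha-\beta)(\alpha+\beta-1)$, and handle $\alpha=\beta$ separately. You are in fact more careful than the paper in checking the hypotheses of Proposition~\ref{prop:Baja} (the WLOG reduction to $\alpha\ge\beta$ via $\G_{\alpha,\beta}=\G_{\beta,\alpha}$ to secure strict increase of $f/g$), and your expanded treatment of the diagonal case spells out what the paper dismisses with ``Similarly''.
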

\begin{proof}
In view of Proposition~\ref{prop:Baja}, for $\alpha \ne \beta$ and $x \in \R_+$ we have 
    \Eq{*}{
  \xi_{\G_{\alpha,\beta}}(x)&= \frac{x^\beta(x^\alpha)''-x^\alpha(x^\beta)''}{(x^\beta(x^\alpha)'-x^\alpha(x^\beta)'}=\frac{(\alpha(\alpha-1)-\beta(\beta-1))x^{\alpha+\beta-2}}{(\alpha-\beta)x^{\alpha+\beta-1}}\\
&=\frac{(\alpha-\beta)(\alpha+\beta-1)}{(\alpha-\beta)x}=\frac{\alpha+\beta-1}{x}.
    }
    Similarly, in the case $\alpha=\beta$, we obtain
    $\xi_{\G_{\alpha,\alpha}}(x)=\frac{2\alpha-1}x=\frac{\alpha+\beta-1}x$.
\end{proof}

\begin{xrem}
Observe that, in view of the above proposition, for every $\alpha \in \R$, all Gini means belonging to the family $\{\G_{c,\alpha-c}\colon c \in \R\}$ have the same residuum. 
\end{xrem}
\subsection{\label{sec:QA}Quasiarithmetic means}
Another important generalization of H\"older's means is the notion of quasiarithmetic means. This family was introduced in the series of several simultaneous papers \cite{Kno28,Def31,Kol30,Nag30} in 1920-s/30-s as a generalization of power means. Given a continuous strictly monotone function $f \colon I \to \R$, the \emph{quasiarithmetic mean} 
$\QA{f}:\bigcup_{n=1}^{\infty} I^n \to I$ is defined by
\Eq{QA}{
\QA{f}(x):= f^{-1} \left( \frac{f(x_1)+\cdots+f(x_n)}{n} \right).
}
If $\alpha\in\R\setminus\{0\}$ and $f(x):=x^\alpha$ for $x\in\R_+$, then $\QA{f}=\H_\alpha$. If $f(x):=\log x$ for $x\in\R_+$, then $\QA{f}=\H_0$, therefore, H\"older means are quasiarithmetic means.

If $g$ is a constant function, then one can see that $\B_{f,g}=\QA{f}$ and hence quasiarithmetic means are also included in the class of Bajraktarević means. Then, as an immediate result of Proposition~\ref{prop:Baja} we can establish the following statement.
\begin{prop}
Let $f\in \mathcal{C}^2(I)$ be a continuous and strictly increasing function with nowhere vanishing first derivative.
Then $\QA{f}$ is twice continuously differentiable, and
\Eq{*}{
\xi_{\QA{f}}(x)= \frac{f''(x)}{f'(x)},\qquad x \in I.
}
\end{prop}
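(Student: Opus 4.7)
The plan is essentially to treat $\QA{f}$ as the special case of a Bajraktarevi\'c mean where the denominator function is identically one, and then to read off the residuum from Proposition~\ref{prop:Baja}. The author has already noted in the paragraph preceding the statement that if $g$ is constant then $\B_{f,g}=\QA{f}$, and indeed taking $g\equiv 1$ in \eq{QDforB} gives
\Eq{*}{
  E(x,u)=f(x)-f(u),
}
whose vanishing at $u=\QA{f}(x_1,\dots,x_p)$ (summed) is precisely the defining identity \eq{QA}.

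First I would verify that the hypotheses of Proposition~\ref{prop:Baja} hold for this choice of $(f,g)$. With $g\equiv 1$ we have $g>0$ trivially, the ratio $f/g=f$ is strictly increasing by assumption, and $gf'-fg'=f'$, which is nowhere vanishing by hypothesis. Then Proposition~\ref{prop:Baja} applies and yields that $\B_{f,g}=\QA{f}$ has residuum equal to $\Phi_{f,g}$.

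Next I would compute $\Phi_{f,g}$ with $g\equiv 1$: since $g'\equiv g''\equiv 0$, the expression
\Eq{*}{
  \Phi_{f,g}(x)=\bigg(\frac{gf''-fg''}{gf'-fg'}\bigg)(x)
}
collapses immediately to $f''(x)/f'(x)$, which is the claimed form of $\xi_{\QA{f}}$. The $\mathcal{C}^2$-regularity of $\QA{f}$ follows from the corresponding statement in Theorem~\ref{thm:QD} (applied via Proposition~\ref{prop:Baja}), once one notes that $E(x,u)=f(x)-f(u)$ is a normalizable quasideviation of class $\mathcal{C}^2$: property (D1) is automatic from strict monotonicity of $f$, (D2) from continuity, (D3) from the strict monotonicity argument in the Bajraktarevi\'c setting with $g\equiv 1$, and $\partial_2 E(x,x)=-f'(x)<0$ by hypothesis (or by reversing the sign convention if $f$ is decreasing).

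There is no real obstacle here; the statement is a direct corollary of Proposition~\ref{prop:Baja}. The only point to be careful about is the bookkeeping to confirm that $g\equiv 1$ genuinely satisfies the standing hypotheses in the Bajraktarevi\'c framework, so that the cited proposition can be invoked verbatim rather than requiring a fresh derivation via Theorem~\ref{thm:QD}.
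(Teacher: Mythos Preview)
Your proposal is correct and follows exactly the route the paper takes: the proposition is stated there as ``an immediate result of Proposition~\ref{prop:Baja}'' via the observation $\QA{f}=\B_{f,1}$, with no separate proof given. Your write-up simply spells out the hypothesis check and the computation $\Phi_{f,1}=f''/f'$ in more detail than the paper bothers to.
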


In view of the latter proposition we see that, in the case when all means $M_i$ are quasiarithmetic means, Theorem~\ref{thm:IM} simplifies to Theorem~\ref{thm:A} (up to certain issues related to the smoothness assumptions).

\subsection{An example} In the last section we show a classical application of our statement in the easy example.
    Let $\MM \colon \R_+^3 \to \R_+^3$ be given by
    \Eq{*}{
    \MM(x_1,x_2,x_3):=\bigg(\frac{x_1^2+x_2^2+x_3^2}{x_1+x_2+x_3},\sqrt[3]{x_1x_2x_3},\sqrt{\frac{x_1+x_2+x_3}{x_1^{-1}+x_2^{-1}+x_3^{-1}}}\bigg).
    }
    Then $\MM=(\G_{2,1},\G_{0,0},\G_{1,-1})$. Since Gini means are strict, by \cite[Theorem~8.8]{BorBor87}, there exists a uniquely determined $\MM$-invariant mean $K \colon \R_+^3\to \R_+$. Using the classical result concerning the comparability of Gini means (cf. \cite{Los71a,Los71c}), one can show that $\MM^{n}(x)$ is nonconstant for every vector $x\in \R_+^3$ and $n \in \N$.
    Moreover
    \Eq{*}{
    \xi_{\G_{2,1}}(t)=\tfrac2t,\quad\xi_{\G_{0,0}}(t)=-\tfrac{1}t,\quad\xi_{\G_{1,-1}}(t)= -\tfrac1t \qquad (t \in \R_+).
    }
    Hence 
    $    \Var\big(\xi_{\G_{2,1}}(t),\xi_{\G_{0,0}}(t),\xi_{\G_{1,-1}}(t)\big)=\tfrac{\Var(2,-1,-1)}{t^2}=\tfrac{2}{t^2}.
    $
    Thus, by Theorem~\ref{thm:IM}, we have 
    \Eq{*}{
            \lim_{n \to \infty} \frac{\Var \MM^{n+1}(x)}{(\Var \MM^{n}(x))^2}=\tfrac14\Var\big(\xi_{M_1}\big(K(x)\big),\dots,\xi_{M_p}\big(K(x)\big)\big)=\frac{1}{2(K(x))^2}.
    }

\section*{Concluding remarks} In this section, we would like to emphasize few final (summarizing) remarks:

\begin{enumerate}[1.]
\item Residuality is an intermediate property of symmetric means between being $\mathcal{C}^2$ and $\mathcal{C}^3$.
\item Binding two results \cite[Lemma~4.1]{Pas16b} and \cite[Lemma~2.2]{Pas18b} we can show that quasiarithmetic means generated by $\mathcal{C}^2$ functions with nowhere vanishing first derivative and such that $f''$ is of almost bounded variation (finite variation restricted to every compact interval; cf. \cite[p. 135]{KucChoGer90}) is a residual mean.
\item By \eq{D-res}, the value of $\xi_M$ measures the distance between $M$ and the arithmetic mean in the vicinity of the diagonal.
\item We proved three ways of calculating the residuum of a mean. First two are given in \eq{xiM} while the third one is presented in Lemma~\ref{lem:Formxi_M}.
\item Every residual means has a well-defined residuum. The converse is not known, that is, it could be that a mean of the form \eq{E1} is not residual. 
\item This study is closely related to the definition of local comparison of means \cite{PalZak17}. In fact, it is easy to observe that the strict inequality between residua implies the comparability of means in the vicinity of the diagonal.
\item We abandoned the study of the nature of the invariant mean. However, a few natural questions appear:
\begin{enumerate}
\item Is it true that if a mean-type mapping consists of residual means, then the invariant mean is also residual?
\item If so, can we calculate the residuum of the invariant mean based solely on the input mean residua?
\end{enumerate}
\item We can generalize the first question to the following one. Assume that all means in the mean-type mapping satisfy a certain smoothness assumption (say they are all $\mathcal{C}^k$). Does it imply that the same smoothness assumption is valid for the invariant mean?
\end{enumerate}


\end{document}